\newtheorem{Lemma}{Lemma}
\newtheorem{Theorem}{Theorem}
\date{}
\begin{document}

\title {Toeplitz matrices in the Boundary Control method}
\author{M.I.Belishev\thanks {St. Petersburg Department of Steklov Mathematical
        Institute, St.Petersburg, Russia, e-mail: belishev@pdmi.ras.ru. Supported
        by the RFBR grant 18-01-00269 and Volks-Wagen Foundation.},\,
         N.A.Karazeeva\thanks {St.Petersburg Department of Steklov Mathematical
        Institute, St. Petersburg, Russia, e-mail: karazeev@pdmi.ras.ru.}.}

\maketitle

\begin{abstract}
Solving inverse problems by dynamical variant of the BC-method is
basically reduced to inverting the connecting operator $C^T$ of
the dynamical system, for which the problem is stated. Realizing
the method numerically, one needs to invert the Gram matrix $\hat
C^T=\{(C^Tf_i,f_j)\}_{i,j=1}^N$ for a representative set of
controls $f_i$. To raise the accuracy of determination of the
solution, one has to increase the size $N$, which, especially in
the multidimensional case, leads to a rapid increase in the amount
of computations. However, there is a way to reduce it by the
proper choice of $f_j$, due to which the matrix $\hat C^T$ gets a
specific block-Toeplitz structure. In the paper, we explain, where
this property comes from, and outline a way to use it in numerical
implementation of the BC-algorithms.
\end{abstract}

\maketitle

\section{Introduction}

\noindent$\bullet$\,\,\, The BC-method  (boundary control method)
is an approach to inverse problems based on their connections with
control theory \cite{B Obzor IP 97}--\cite{B_UMN}. Its {\it local}
variant, which is considered in the paper, directs to application
in geophysics (seismology), where parameters of the medium
depending on the depth must be determined from the data on the day
surface. The wave process is initiated by the sources (controls)
disposed on a {\it part} of the boundary of the sounded domain;
the data are recorded on the same part. In accordance with the
finiteness of the wave propagation speed, the parameters must be
determined in the {\it real time}: the longer is the observation
time, the bigger is the depth of determination \cite{B Obzor IP
97,B How to see waves,CUBO_2,B EACM}.

Numerical algorithms based on the local variant are successfully
tested in the series of experiments \cite{B Obzor IP 97,BGotIv
COCV,BGotlib JIIPP,BIvKubSem,De Hoop,BIvKubSem
Marmousi,Oks,Pest_2010,Tim,YY}. Their further development and
possible use in the work with real data is an important
perspective goal. Our paper is adjacent to articles
\cite{BKaraz_2D} and \cite{BKaraz_3D}, which offer the simple
tests for probing algorithms based on the BC-method. It reveals
additional opportunities in numerical implementation of the
BC-method.
\smallskip

\noindent$\bullet$\,\,\,Typically, the inverse problems are {\it
nonlinear}. The principal advantage of the classical approach by
I.M.Gelfand, B.M.Levitan, M.G.Krein and V.A.Mar\-chenko is that it
reduces them to solving the {\it linear} problems and equations.
The BC-method inherits this advantage. It reveals a unified view
at the classical GLKM equations and shows that to solve them is to
invert the so-called {\it connecting operator $C^T$} of the
dynamical system associated with the forward problem (see
\cite{BMikh_JIIPP}). It is the fact, which  provides the relevant
multidimensional generalizations.

In accordance with these generalizations, to solve an inverse
problem by dynamical variant of the BC-method is basically to
invert the connecting operator $C^T$ of the relevant dynamical
system. Respectively, realizing the method numerically, one needs
to invert the Gram matrix $\hat C^T=\{(C^Tf_i,f_j)\}_{i,j=1}^N$
for a representative set of controls $f_i$, which simulates a
basis in the space of controls. Along this way, we encounter two
traditional obstacles:

\noindent 1) to raise the accuracy of determination of the
solution, one has to increase the number of controls (the size
$N$), which, especially in the multidimensional case, leads to a
rapid growth in the amount of computations;

\noindent 2) as the size grows, the matrix $\hat C^T$ becomes more
and more ill-conditional: the lower bound of its spectrum rapidly
tends to zero.

\noindent The latter is unavoidable: it reflects the strong
ill-posedness of {\it multidimensional} inverse problems, as well
as of the corresponding boundary control problem, which is being
solved within the BC-method. To deal with ill-posedness, a variety
of regularization methods are used (see the works cited above). We
pin certain hopes on A.A.Timonov's approach \cite{Tim}, which uses
the mean curvature flow technique.
\smallskip

\noindent$\bullet$\,\,\,The given paper deals with the difficulty
1). Its main subject is the remarkable fact that the matrix $\hat
C^T$ may have a specific {\it block-Toeplitz structure}. Due to
this, it is possible to significantly reduce the amount of
computations during the determination of the inverse matrix $[\hat
C^T]^{-1}$. We explain, where this property comes from, and
outline a way to use it in numerical implementation of the
BC-algorithms.

For the sake of determinacy and simplicity, we deal with a
concrete inverse problem treated in \cite{CUBO_2} and
\cite{BKaraz_2D,BKaraz_3D}. Namely, the simplicity is that the
geometry of rays in the domain, which the problem is considered
in, is Euclidean. However, the Toeplitz structure of $\hat C^T$ is
the fact of rather general character.
\smallskip

\noindent$\bullet$\,\,\,The authors thank I.V.Kubyshkin for the
kind help in computer graphics.

\section{Toeplitz structure}

\subsubsection*{Forward problems}
\noindent$\bullet$\,\, The initial boundary value problem
 \begin{align}
\label{F1} &u_{tt}-\Delta u-\langle\nabla \ln \rho, \nabla u\rangle = 0 &&\text{in} \quad {\mathbb R^2_+} \times (0,T), \\
\label{F2} &u|_{t=0} = u_t|_{t=0}=0 && \text{in} \quad \overline {\mathbb R^2_+},\\
\label{F3} &u_y|_{y=0}=f && \text{for} \quad 0\leqslant t\leqslant
T,
 \end{align}
is considered in the half-plain ${\mathbb
R^2_+}:=\{(x,y)\in{\mathbb R^2}\,|\,\,y>0\}$, where  $\Delta
u=u_{xx}+u_{yy}$,\, $\langle\nabla \ln \rho,\nabla u\rangle=(\ln
\rho)_xu_x+(\ln \rho)_yu_y$,\, $\rho=\rho(x,y)$ is a smooth
positive function ({\it reduced sound velocity}), $f=f(x,t)$ is a
Neumann {\it boundary control}, $T>0$ is a final time,
\,$u=u^f(x,y,t)$ is a solution ({\it wave}).
\smallskip

\noindent$\bullet$\,\,The solutions to the hyperbolic problem
(\ref{F1})--(\ref{F3}) obey a finiteness of the domain of
influence principle. Let $\sigma=\{(x,0)\,|\,\alpha\leqslant x
\leqslant \beta\}$ be a finite segment of the boundary $\mathbb
R_x:=\partial\mathbb R^2_+$,
 $$\Omega^r_\sigma:=\{p\in {\mathbb
R^2_+}\,|\,{\rm dist\,}(p,\sigma)<r\}
 $$
its metric neighborhood of radius $r>0$ ($\rm dist$ is the
Euclidean distance in $\overline{\mathbb R^2_+}$). If ${\rm
supp\,}f\subset \overline{\sigma}\times [0,T]$, i.e., the control
is supported on the segment $\sigma$, then
 \begin{equation}\label{Eq supp u^f}
{\rm supp\,}u^f(\,\cdot\,,\,\cdot\,, t)\,\subset\,\Omega^t_\sigma,
\quad 0<t\leqslant T,
 \end{equation}
holds, i.e., the corresponding wave is localized in the
$t$-neighborhood of the segment, from which the control acts.
\smallskip

\noindent Relation (\ref{Eq supp u^f}) means that the waves
propagate in a half-plane with the unit velocity.
\smallskip

\noindent$\bullet$\,\,By hyperbolicity, for the controls $f$
provided ${\rm supp\,}f\subset \overline{\sigma}\times [0,2T]$,
the following {\it extended} problem is also well posed:
 \begin{align}
\label{Ext1} &u_{tt}-\Delta u-\langle\nabla \ln \rho, \nabla u\rangle = 0, && p\in\Omega^T_\sigma,\,\,0<t<2T-{\rm dist\,}(p,\sigma); \\
\label{Ext2} &u=0\,&& \text{for} \quad t<{\rm dist\,}(p,\sigma);\\
\label{Ext3} &u_y|_{y=0}=f && \text{for} \quad 0\leqslant
t\leqslant 2T,
 \end{align}
where $p=(x,y)\in {\mathbb R^2_+}$.

\subsubsection*{Dynamical  system}
Problem (\ref{F1})--(\ref{F3}) is endowed with standard attributes
of a dynamical system: spaces and operators.
\smallskip

\noindent$\bullet$\,\,An {\it outer space} is the space ${\mathscr
F}^T_\sigma:=L_2(\sigma\times[0,T])$ of controls acting from
$\sigma$, with the inner product
 \begin{equation}\label{Eq product in F}
(f,g)_{{\mathscr F}^T_\sigma} = \int\limits_{\sigma \times [0,T]}
f(x,t)\,g(x,t)\,\rho(x,0) \,dx\,dt.
 \end{equation}

\noindent$\bullet$\,\,An {\it inner space} of the states (waves)
is ${\mathscr H^T_\sigma}:=L_{2,\rho}({\Omega^T_\sigma})$ with the
inner product
 \begin{equation*}
(v,w)_{{\mathscr H^T_\sigma}}= \int\limits_{{\Omega^T_\sigma}}
v(x,y)\,w(x,y)\,\rho(x,y)\,dx\,dy.
 \end{equation*}
By virtue of (\ref{Eq supp u^f}) the waves
$u^f(\,\cdot\,,\,\cdot\,,t)$ are its elements (the functions of
the variables $x$ and $y$ depending on $t$ as a parameter).
\smallskip

\noindent$\bullet$\,\,A {\it control operator} $W^T: {\mathscr
F}^T _\sigma\to {\mathscr H}^T_\sigma$,
 \begin{equation}\label{Eq W^T}
(W^Tf)(x,y):=u^f(x,y,T), \quad (x,y)\in \mathbb R^2_+
 \end{equation}
is the operator, which solves problem (\ref{F1})--(\ref{F3}). It
is compact and injective for any  $T>0$ \cite{CUBO_2}.
\smallskip

\noindent$\bullet$\,\,A {\it response operator} $R^T: {\mathscr
F}^T_\sigma \to {\mathscr F}^T_\sigma$,
 \begin{equation*}
(R^Tf)(x,t):=u^f(x,0,t), \qquad x\in\sigma,\enskip 0\leqslant t
\leqslant T
 \end{equation*}
describes the reaction of the system on the effect of control. It
is also compact. The right-hand side in this definition means a
pressure observed on the boundary of the half-plane. The
well-known representation
 \begin{equation*}
(R^Tf)(x,t):=\int\limits_0^t ds\int\limits_\sigma
r(x,x';t-s)\,f(x',s)\,dx', \qquad x\in\sigma,\enskip 0\leqslant t
\leqslant T
 \end{equation*}
holds with a piece-wise continuous kernel $r$.
\smallskip

\noindent$\bullet$\,\,An {\it extended} response operator $R^{2T}:
{\mathscr F}^{2T}_\sigma \to {\mathscr F}^{2T}_\sigma$,
 \begin{equation*}\label{Eq R^T ext def}
(R^{2T}f)(x,t):=u^f(x,0,t), \qquad x\in\sigma,\enskip 0\leqslant t
\leqslant 2T
 \end{equation*}
is associated with the problem (\ref{Ext1})--(\ref{Ext3}) and the
representation
 \begin{equation}\label{Eq R^T ext repres}
(R^{2T}f)(x,t):=\int\limits_0^t ds\int\limits_\sigma
r(x,x';t-s)\,f(x',s)\,dx', \qquad x\in\sigma,\enskip 0\leqslant t
\leqslant 2T
 \end{equation}
holds. Both operators $R^{T}$ and $R^{2T}$ are compact. Also, both
of them are determined by the values of the function $\rho$ in
$\Omega^T_\sigma$ only (do not depend on $\rho$ outside
$\Omega^T_\sigma$).
\smallskip

\noindent$\bullet$\,\,An operator $C^T:{\mathscr F}^T_\sigma \to
{\mathscr F}^T_\sigma$,
 \begin{equation}\label{Eq C^T}
C^T:=(W^T)^\ast W^T
 \end{equation}
is called {\it connecting operator}. For controls $f,g \in
{\mathscr F}^T_\sigma$ one has
\begin{equation*}
\left(u^f(\,\cdot\,,\,\cdot\,,T),u^g(\,\cdot\,,\,\cdot\,,T)\right)_{{\mathscr
H}^T_\sigma}\overset{(\ref{Eq W^T})}=(W^Tf,W^Tg)_{{\mathscr
H}^T_\sigma}=(C^Tf,g)_{{\mathscr F}^T_\sigma};
\end{equation*}
so  $C^T$ connects the metrics of the outer and inner spaces. In
view of compactness and injectivity of  $W^T$, the operator $C^T$
is also compact and injective. Also, by its definition (\ref{Eq
C^T}), $C^T$ is self-adjoint and positive.
\smallskip

\noindent$\bullet$\,\,\,One of the key facts of BC-method is a
simple and explicit relation between the response and connecting
operators (see \cite{B Obzor IP 97,B How to see waves,CUBO_2}).
For its formulation we introduce:

the operator of the odd extension $S^T: {\mathscr F}^T_\sigma \to
{\mathscr F}^{2T}_\sigma$,
 $$
(S^T f)(\,\cdot\,,t)\,:=\, \begin{cases}
                       f(\,\cdot\,,t), &0\leqslant t<T;\\
                       - f(\,\cdot\,,2T-t), &T\leqslant t \leqslant 2T;
                       \end{cases}
 $$

the integration $J^{2T}:{\mathscr F}^{2T}_\sigma \to {\mathscr
F}^{2T}_\sigma$,
 $$
(J^{2T}f)(\,\cdot\,,t):=\int\limits_0^t f(\,\cdot\,,s)\,ds\,,
\qquad 0\leqslant t \leqslant 2T;
 $$

the operator $P^{2T}: {\mathscr F}^{2T}_\sigma\to{\mathscr
F}^{2T}_\sigma$ that selects the odd part of `long' controls:
 $$
(P^{2T} f)(\,\cdot\,,t)\,:=\,{1\over 2}\,\left[f(\,\cdot\,,t)-
f(\,\cdot\,,2T-t)\right]\,, \qquad 0\leqslant t \leqslant 2T;
 $$

the reduction operator $N^{2T}: {\mathscr
F}^{2T}_\sigma\to{\mathscr F}^{T}_\sigma$,
 $$
(N^{2T}f)(\,\cdot\,,t):= f(\,\cdot\,,t)\,, \qquad 0\leqslant t
\leqslant T.
 $$
Also, note a simply verified relation $(S^T)^*=2N^{2T}P^{2T}$.
\begin{Lemma}
The representation
 \begin{equation}\label{Eq C^T via R^2T}
C^T\,=\,\frac{1}{2}\,(S^T)^\ast J^{2T} R^{2T} S^T
 \end{equation}
is valid.
\end{Lemma}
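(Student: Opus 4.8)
The plan is to establish the identity $C^T = \frac12 (S^T)^\ast J^{2T} R^{2T} S^T$ by testing it against an arbitrary control $f \in \mathscr F^T_\sigma$ and unwinding the definitions. The natural strategy is to relate the wave $u^f(\,\cdot\,,\,\cdot\,,T)$ inside $\Omega^T_\sigma$ to boundary data, which is precisely what the extended response operator $R^{2T}$ encodes. First I would introduce the odd extension $\tilde f := S^T f \in \mathscr F^{2T}_\sigma$ and consider the wave $u^{\tilde f}$ solving the extended problem \eqref{Ext1}--\eqref{Ext3}. The key observation is that, because $\tilde f$ is odd about $t=T$, the solution $u^{\tilde f}$ inherits a symmetry: $u^{\tilde f}(p,2T-t) = -u^{\tilde f}(p,t)$ for $p \in \Omega^T_\sigma$, which in particular forces $u^{\tilde f}(\,\cdot\,,\,\cdot\,,T) = 0$ in $\Omega^T_\sigma$. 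This is the odd-extension trick that lies at the heart of the BC-method, and it is the step where one must be careful — it follows from uniqueness for the hyperbolic problem together with the fact that $t \mapsto -u^{\tilde f}(p,2T-t)$ solves the same equation with the same (odd-extended) boundary data on the relevant space-time region, but one has to check the domain-of-influence bookkeeping so that the two solutions agree on the full backward light cone. I expect this symmetry argument to be the main obstacle.

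Granting the symmetry, I would next relate $u^{\tilde f}$ to $u^f$. On the time interval $[0,T]$ the extended control $\tilde f$ coincides with $f$, so by finite speed of propagation and uniqueness $u^{\tilde f} = u^f$ on $\Omega^T_\sigma \times [0,T]$. Then I would compute the bilinear form $(C^T f, g)_{\mathscr F^T_\sigma} = (u^f(\,\cdot\,,\,\cdot\,,T), u^g(\,\cdot\,,\,\cdot\,,T))_{\mathscr H^T_\sigma}$ using a Blagoveshchenskii-type energy identity: multiply the wave equation for $u^{\tilde f}$ by $u^{\tilde g}$ (with $\tilde g = S^T g$), integrate over $\Omega^T_\sigma$, integrate by parts in space, and integrate twice in time from $0$ to $2T$. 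The Laplacian and the first-order drift term $\langle \nabla \ln\rho,\nabla u\rangle$ combine (using the weight $\rho$) so that the spatial divergence terms telescope, leaving only a boundary contribution on $\sigma \times [0,2T]$ built from $u^{\tilde f}|_{y=0} = R^{2T}\tilde f$ and $u^{\tilde g}_y|_{y=0} = \tilde g$. The double time integration is what produces the operator $J^{2T}$, and the boundary term is exactly $(J^{2T} R^{2T} S^T f, S^T g)_{\mathscr F^{2T}_\sigma}$ up to the factor $\tfrac12$.

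Finally I would reassemble: using $u^{\tilde f}(\,\cdot\,,\,\cdot\,,T) = 0$ and the symmetry, the energy integral over $[0,2T]$ evaluates (after the double integration in time) to a multiple of $(u^f(\,\cdot\,,\,\cdot\,,T), u^g(\,\cdot\,,\,\cdot\,,T))_{\mathscr H^T_\sigma}$ — the boundary conditions \eqref{Ext2} kill the data at $t=0$, and the midpoint symmetry converts the integral over $[0,2T]$ into twice the relevant quantity. Comparing the two computed expressions for the same energy integral and invoking the adjoint relation $(S^T)^\ast = 2 N^{2T} P^{2T}$ to rewrite the pairing over $\mathscr F^{2T}_\sigma$ as a pairing over $\mathscr F^T_\sigma$ against $g$, one reads off \eqref{Eq C^T via R^2T}. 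Since $g$ was arbitrary and both sides are bounded operators, the operator identity follows. The routine parts are the integrations by parts and keeping track of the constant $\tfrac12$; the conceptual content is entirely in the odd-extension symmetry and the Blagoveshchenskii double-time-integration identity.
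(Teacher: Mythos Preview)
The paper does not actually prove this lemma: it is stated as ``one of the key facts of the BC-method'' with references to \cite{B Obzor IP 97,B How to see waves,CUBO_2}, and no argument is supplied. So there is nothing in the paper to compare your proof against. That said, your proposal contains a genuine error that would have to be fixed before the argument could go through.

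The claimed symmetry $u^{\tilde f}(p,2T-t)=-u^{\tilde f}(p,t)$ is false, and in fact you contradict it yourself two sentences later. You correctly observe that $\tilde f|_{[0,T]}=f$, whence by causality $u^{\tilde f}(\,\cdot\,,\,\cdot\,,T)=u^f(\,\cdot\,,\,\cdot\,,T)=W^Tf$; since $W^T$ is injective this is nonzero for $f\not=0$, contradicting your assertion that $u^{\tilde f}(\,\cdot\,,\,\cdot\,,T)=0$. The reason the reflection argument fails is that the function $v(p,t):=-u^{\tilde f}(p,2T-t)$ does satisfy the PDE and the boundary condition $v_y|_{y=0}=\tilde f$, but its Cauchy data at $t=0$ are $v|_{t=0}=-u^{\tilde f}(\,\cdot\,,2T)$ and $v_t|_{t=0}=u^{\tilde f}_t(\,\cdot\,,2T)$, which are \emph{not} zero. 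The zero initial conditions at $t=0$ break the $t\mapsto 2T-t$ symmetry; no amount of domain-of-influence bookkeeping repairs this.

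The standard proof (in the references the paper cites) places the odd extension not on the physical wave but on the auxiliary function $w(s,t):=(u^f(\cdot,s),u^g(\cdot,t))_{\mathscr H^T_\sigma}$. Integration by parts in $\Omega^T_\sigma$, using $\rho u_{tt}={\rm div}(\rho\nabla u)$, yields the one-dimensional wave equation
\[
(\partial_s^2-\partial_t^2)\,w(s,t)=\big((R f)(\cdot,s),g(\cdot,t)\big)_{L^2_\rho(\sigma)}-\big(f(\cdot,s),(R g)(\cdot,t)\big)_{L^2_\rho(\sigma)}
\]
with zero Cauchy data on both axes $s=0$ and $t=0$. D'Alembert's formula then expresses $w(T,T)=(C^Tf,g)_{\mathscr F^T_\sigma}$ as an integral of the right-hand side over the characteristic triangle with vertex $(T,T)$; it is in rewriting \emph{that} triangle integral that the odd extension $S^T$ and the time integral $J^{2T}$ appear, producing \eqref{Eq C^T via R^2T}. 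Your Blagoveshchenskii-type integration by parts is the right engine, but it should drive the two-time function $w$, not the single wave $u^{\tilde f}$.
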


\subsubsection*{Inverse problem}

In the domain $\Omega^T_\sigma$ filled with waves at the final
moment $t=T$, one selects a subdomain ({\it ray tube})
 $$
B^T_\sigma\,:=\,\{(x,y)\,|\,\,x\in\sigma,\,\,0\leqslant y<T\}\,,
 $$
which is covered by the rays emanated from the points of $\sigma$
orthogonally to the boundary (see Fig.1).

The {\it inverse problem} is to determine the function $\rho$ in
the tube  $B^T_\sigma$ from the given extended response operator
$R^{2T}$. Also, for the sake of simplicity, we assume that the
function $\rho(\cdot,0)$ is known in $\sigma$ that enables one to
use the product (\ref{Eq product in F}).

\begin{figure}[htp]
\begin{center}
\includegraphics[width=4in]{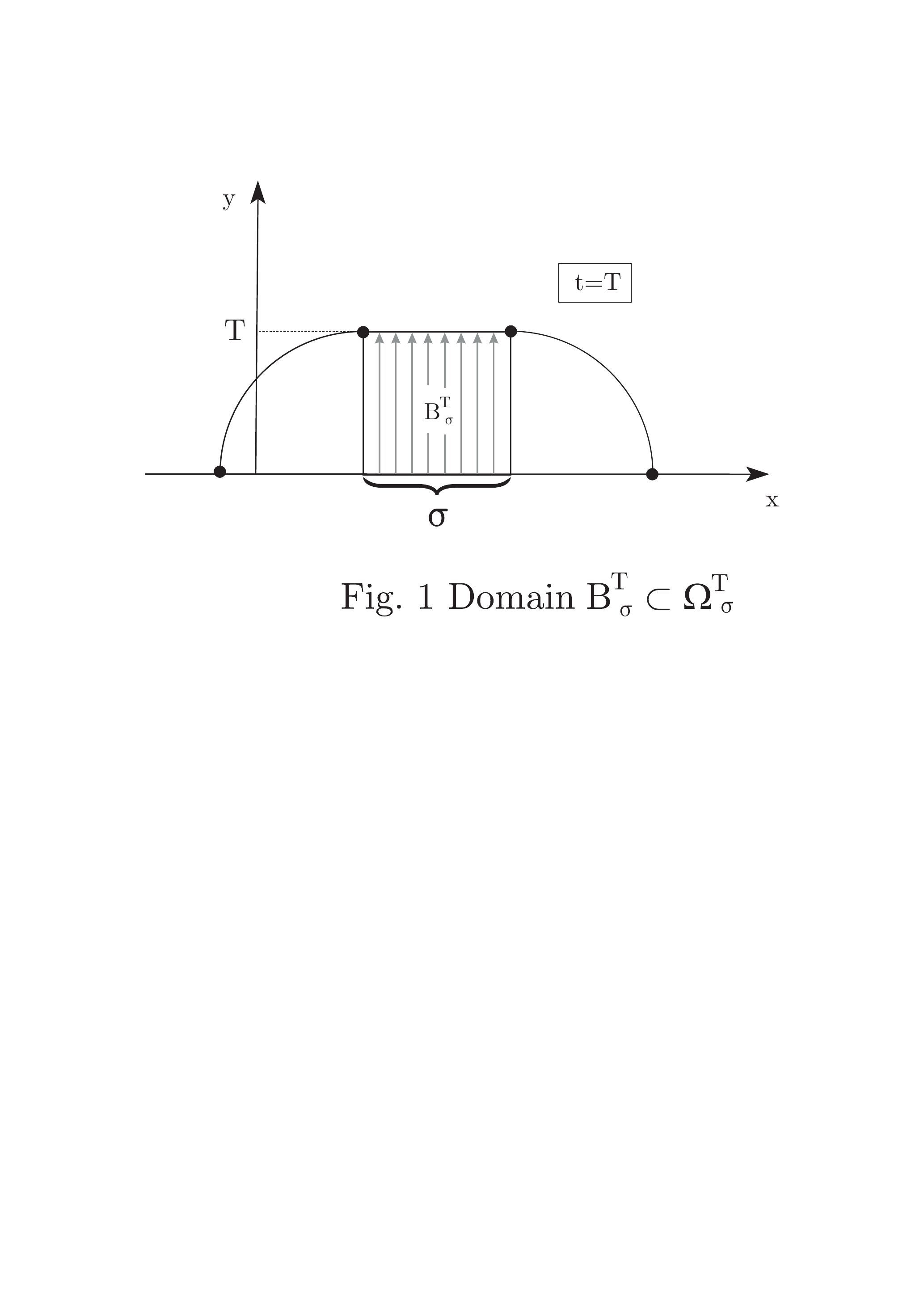}
\end{center}
\end{figure}

\subsubsection*{BCP}\label{BCP}

\noindent$\bullet$\,\,As usual in the BC-method, one relates the
inverse problem with a corresponding boundary control problem
(BCP). For the problem stated above, the relevant BCP is the
following.

Let $1^T_\sigma\in{\mathscr H^T_\sigma}$ be the function equal to
$1$ identically in ${\Omega^T_\sigma}$. The BCP is to find the
control $f\in{\mathscr F^T_\sigma}$, which provides
 \begin{equation}\label{Eq BCP}
u^f(\cdot,\cdot,T)\,=\,1^T_\sigma.
 \end{equation}
To be more precise, for determination of $\rho|_{B^T_\sigma}$ one
needs to solve a family of the `shortened' problems (\ref{Eq BCP})
with the r.h.s. $1^\xi_\sigma\in\mathscr H^\xi_\sigma$ for all
$0<\xi\leqslant T$ and then apply the so-called {\it amplitude
formula} \cite{CUBO_2,BKaraz_2D}. However, to demonstrate the
appearance of Toeplitz matrices, it suffices to consider a single
problem (\ref{Eq BCP}).
\smallskip

\noindent$\bullet$\,\,Writing (\ref{Eq BCP}) in the form
$W^Tf=1^T_\sigma$ (see (\ref{Eq W^T})) and applying the adjoint
operator, we get $(W^T)^*W^Tf=(W^T)^*1^T_\sigma$. Integration by
parts provides $(W^T)^*1^T_\sigma=\varkappa^T$, where
$\varkappa^T\in{\mathscr F^T_\sigma}:\varkappa^T(x,t):=T-t$ (see
\cite{CUBO_2}). At last, recalling the definition of the
connecting operator, we arrive at the equation
 \begin{equation}\label{Eq C^Tf=kappa}
C^T f\,=\,\varkappa^T \qquad {\rm in}\,\,\,{\mathscr
F^T_\sigma}\,.
 \end{equation}
For the subsequent, it is convenient to reduce (\ref{Eq
C^Tf=kappa}) to an equation in $\mathscr
F^{2T}_\sigma=L_2([0,2T]\times\sigma)$ as follows.

Denote $\tilde f:=S^Tf\in\mathscr F^{2T}_\sigma$ and $p(x,x'; t)
:= \int\limits _0^t r(x, x'; s)\,ds$..
 \begin{Lemma}
Equation (\ref{Eq C^Tf=kappa}) is equivalent to the equation in
$\mathscr F^{2T}_\sigma$ of the form
 \begin{equation}\label{Eq Basic Eq}
{\int \limits _0^{2T} d s \int \limits _\sigma p(x,x'; |t -
s|)\,\tilde
f(x',s)\,dx'=4\,(T-t),\quad(x,t)\in\sigma\times[0,2T].}
 \end{equation}
 \end{Lemma}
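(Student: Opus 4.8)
The plan is to substitute the factorization (\ref{Eq C^T via R^2T}) of $C^T$ into equation (\ref{Eq C^Tf=kappa}), pass from the control $f$ to the long control $\tilde f=S^Tf$, and then simplify the resulting operator with the help of the identity $(S^T)^\ast=2N^{2T}P^{2T}$ and of the oddness of $\tilde f$ about $t=T$. Each transformation below is an equivalence, so the lemma follows, with the correspondence $\tilde f=S^Tf$ between solutions.

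\emph{Step 1 (reduction to the doubled segment).} Inserting (\ref{Eq C^T via R^2T}) into (\ref{Eq C^Tf=kappa}) and writing $\tilde f=S^Tf$ gives $\tfrac12(S^T)^\ast J^{2T}R^{2T}\tilde f=\varkappa^T$; since $\tfrac12(S^T)^\ast=N^{2T}P^{2T}$, this becomes $N^{2T}P^{2T}J^{2T}R^{2T}\tilde f=\varkappa^T$ in $\mathscr F^T_\sigma$. The function $P^{2T}J^{2T}R^{2T}\tilde f$ is odd about $t=T$ by the definition of $P^{2T}$, and any such function is recovered from its restriction to $[0,T]$ by $S^T$ (indeed $N^{2T}$ is a left inverse of $S^T$); as $S^T\varkappa^T$ equals $T-t$ on the whole of $[0,2T]$, the above is equivalent to the equality $P^{2T}J^{2T}R^{2T}\tilde f=T-t$ regarded now in $\mathscr F^{2T}_\sigma$.

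\emph{Step 2 (unfolding $J^{2T}R^{2T}$ and folding the kernel).} Using the representation (\ref{Eq R^T ext repres}), I would write out the iterated time integral defining $J^{2T}R^{2T}\tilde f$, interchange the order of integration over $\{0\le s\le\tau\le t\}$, and note that the inner $\tau$-integral of $r(x,x';\tau-s)$ over $[s,t]$ is exactly $p(x,x';t-s)$; hence $(J^{2T}R^{2T}\tilde f)(x,t)=\int_0^t ds\int_\sigma p(x,x';t-s)\,\tilde f(x',s)\,dx'$. The operator $P^{2T}$ then subtracts the value of this expression at $2T-t$; in that term I substitute $s\mapsto 2T-s$ and invoke $\tilde f(\,\cdot\,,2T-s)=-\tilde f(\,\cdot\,,s)$, which turns the causal kernel $p(x,x';2T-t-s)$ into $p(x,x';s-t)$ and moves the integration to $s\in[t,2T]$. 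The two pieces glue into a single integral over $[0,2T]$ with kernel $p(x,x';|t-s|)$; combined with Step 1 and a collection of the numerical factor in front of $T-t$, this is precisely (\ref{Eq Basic Eq}).

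\emph{Main obstacle.} No deep difficulty is expected; the delicate part is purely the bookkeeping around $P^{2T}$: one has to check that $P^{2T}J^{2T}R^{2T}\tilde f$ is indeed odd about $t=T$ so that the reduction $N^{2T}$ can be dropped in favour of an equation on all of $[0,2T]$ without destroying the equivalence, and to follow how the oddness of $\tilde f$ is exactly what symmetrizes the Volterra kernel $p(x,x';t-s)$ into the $|t-s|$-form that is responsible for the Toeplitz structure. Once this is set up correctly, the Fubini interchange, the change $s\mapsto 2T-s$, and the final constant-chasing are routine.
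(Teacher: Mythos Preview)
Your argument is correct and reaches the same endpoint as the paper, but the device that produces the symmetric kernel $p(x,x';|t-s|)$ is different. The paper symmetrizes at the operator level: setting $M^{2T}:=J^{2T}R^{2T}$ and invoking $C^T=(C^T)^*$, it rewrites $C^T=\tfrac14\,(S^T)^*\bigl[M^{2T}+(M^{2T})^*\bigr]S^T$, so that the $|t-s|$-kernel is simply the kernel of the self-adjoint sum $M^{2T}+(M^{2T})^*$; applying $S^T$ to (\ref{Eq C^Tf=kappa}) then removes $S^T(S^T)^*$ and yields (\ref{Eq Basic Eq}) directly. You instead keep the one-sided operator $M^{2T}$, drop $N^{2T}$ by the odd-extension argument, and let the oddness of $\tilde f$ perform the symmetrization inside $P^{2T}M^{2T}\tilde f$ via the change of variable $s\mapsto 2T-s$. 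The two routes coincide because, on odd controls, $2P^{2T}M^{2T}\tilde f=\bigl[M^{2T}+(M^{2T})^*\bigr]\tilde f$; your substitution is exactly this identity unpacked. The paper's version makes the Toeplitz structure visibly a consequence of self-adjointness, while yours is a more hands-on kernel computation that does not appeal to $C^T=(C^T)^*$. One small caution: carry out the ``collection of the numerical factor'' explicitly rather than leaving it implicit, since this is precisely where the $\tfrac12$ in the definition of $P^{2T}$ and the factor in $(S^T)^*=2N^{2T}P^{2T}$ interact.
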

\begin{proof} By (\ref{Eq R^T ext repres}), the operator
$M^{2T}:=J^{2T}R^{2T}$ acts in $\mathscr F^{2T}_\sigma$ by the
rule
 \begin{align*}
& \left ( M^{2T} f \right) (x,t)= \int \limits _0^t d\, s \int
\limits _\sigma p(x, x'; t -s)\,f(x',s)\,dx'=\\
&=\int \limits _0^t d\, s \int \limits _\sigma p(x, x'; |t
-s|)\,f(x',s)\,dx' ,\quad (x,t)\in\sigma\times[0,2T].
 \end{align*}
 Its
adjoint acts by
 \begin{align*}
& \left((M^{2T})^* f \right)(x,t) = \int \limits _t^{2T} d s \int
\limits _\sigma p(x,x'; s-t) \,f(x',s)\,d x'=\\
&=\int \limits _t^{2T} d s \int \limits _\sigma p(x,x'; |s-t|)
\,f(x',s)\,d x',\quad (x,t)\in\sigma\times[0,2T].
 \end{align*}
As a result, we have
 \begin{align}
\notag &  \left (\left[ M^{2T} + (M^{2T})^* \right]f\right)
(x,t)=\\ \label{Eq M+M*}& = {\int \limits _0^{2T}} d s \int
\limits _\sigma p(x,x';{|t - s|}) \,f(x',s)\, d x',\quad
(x,t)\in\sigma\times[0,2T].
 \end{align}
In the meantime, (\ref{Eq C^T via R^2T}) implies
 \begin{align*}
& C^T = (C^T)^* =
\frac{1}{2}\,\left[C^T+(C^T)^*\right]=\\
&=\frac{1}{2}\,\left[\frac{1}{2}\,(S^T)^*J^{2T}R^{2T}S^T+\frac{1}{2}\,(S^T)^*(J^{2T}R^{2T})^*S^T\right]=\\
& = \frac {1}{4}\,(S^T)^* \left[ M^{2T} + (M^{2T})^* \right]S^T\,.
 \end{align*}

Applying $S^T$ to (\ref{Eq C^Tf=kappa}) and using $S^T(S^T)^*=I$,
we arrive at
 $$
\left[ M^{2T} + (M^{2T})^* \right]\tilde
f=4\,{\tilde\varkappa}^{T}\qquad \text{in}\,\,\mathscr
F^{2T}_\sigma,
 $$
where $\tilde
f:=S^Tf,\,\,{\tilde\varkappa}^{T}:=S^T\varkappa^T=T-t$. Then
(\ref{Eq M+M*}) implies (\ref{Eq Basic Eq}).
 \end{proof}
To get the solution $f$ to (\ref{Eq C^Tf=kappa}), one can solve
(\ref{Eq Basic Eq}) and then take $f=\tilde f|_{0\leqslant
t\leqslant T}$.
\smallskip

Equations (\ref{Eq C^Tf=kappa}) and (\ref{Eq Basic Eq}) are the
relevant multidimensional analogs of the classical GLKM equations:
see \cite{B_mult_GLKM,BMikh_JIIPP}.

\subsubsection*{Toeplitz matrix}

\noindent$\bullet$\,\,Solving (\ref{Eq Basic Eq}) numerically, the
solution $\tilde f$ is sought in the form of an expansion over a
complete linearly independent system of controls in $\mathscr
F^{2T}_\sigma$. Such a system is simulated by a finite
sufficiently rich system of controls, to the description of which
we proceed.
\smallskip

In the space-time plane $\mathbb R^2_{(x,t)}$, choose a function
({\it basic source})
 $$
g=g(x,t):\quad{\rm
supp\,}g=\Delta_0^0:=[0,\varepsilon]\times[0,\delta]\qquad(\varepsilon,\delta>0)
 $$
supported on the {\it basic rectangular} $\Delta_0^0$; let
$$\Delta^i_j:=\{(x+j\varepsilon,t+i\delta)\,|\,\,(x,t)\in\Delta^0_0,\,\,\,j=0,1,\dots,M-1;\,\,i=0,1,\dots,N-1\}$$
(see Fig.2) and
$g^i_j(x,t):=g(x-j\varepsilon,t-i\delta),\,\,g^0_0:=g$. For the
fixed (big) $M$ and $N$, fit $\varepsilon=|\sigma|\slash
M,\delta=(2T)\slash N$ to provide the partition
 $$
\sigma\times[0,2T]=\bigcup\limits_{\substack{j=0,1,...,M-1;\\i=0,1,...,N-1}}\Delta^i_j\,.
 $$
Take {$g^i_j=g^i_j(x,t):=g(x-j\varepsilon,t-i\delta)$} supported
in {$\Delta_j^i$}. By their choice, the controls $g^i_j$ are
linearly independent.

\begin{figure}[htp]
\begin{center}
\includegraphics[width=4in]{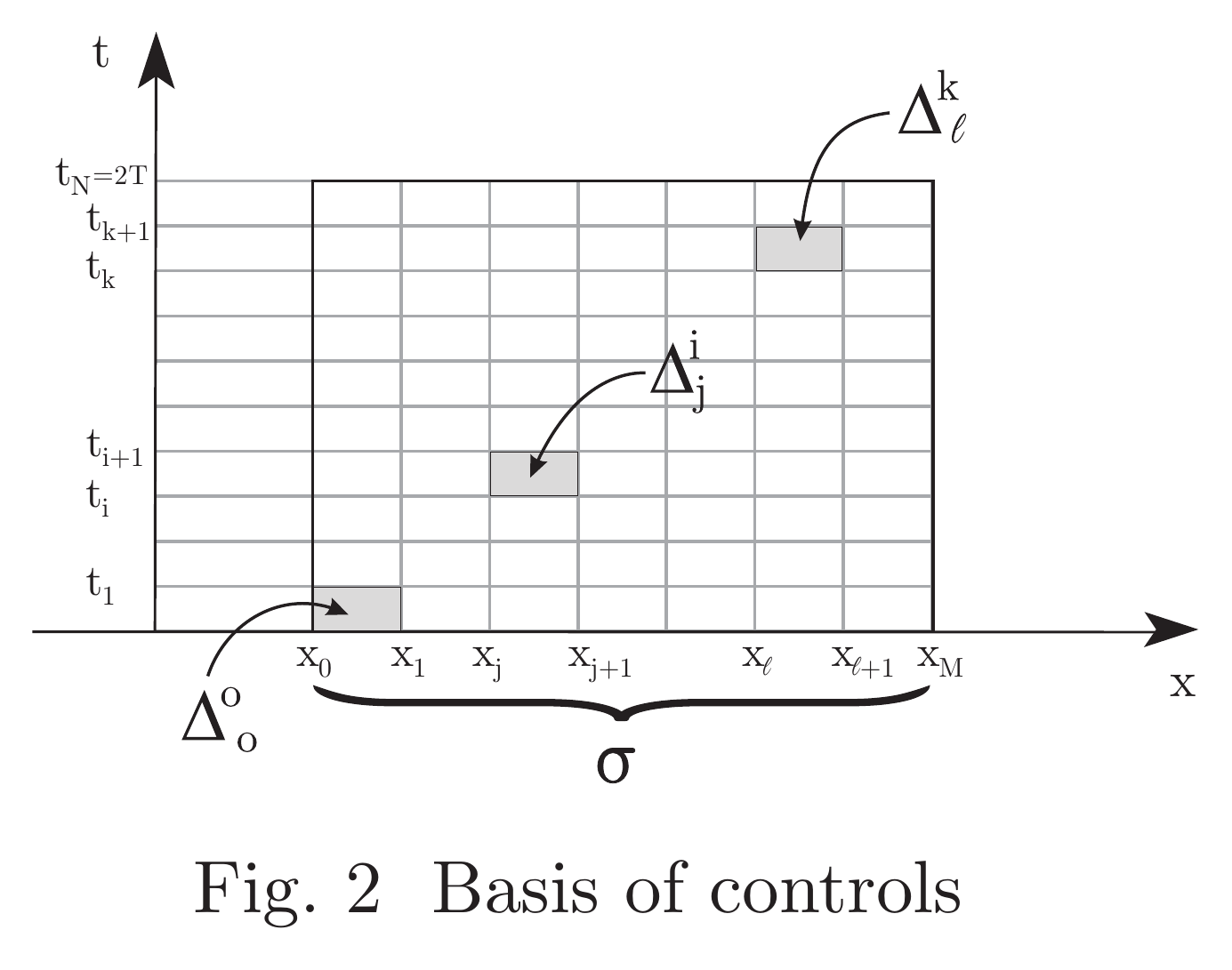}
\end{center}
\end{figure}

\noindent$\bullet$\,\,Solving (\ref{Eq Basic Eq}), we seek for the
solution in the form
 $$
\tilde
f(x,t)=\sum\limits_{\substack{j=0,1,...,M-1;\\i=0,1,...,N-1}}c^i_j\,g_j^i(x,t)
 $$
with the unknown $c^i_j$ and arrive at the linear system
 \begin{equation}\label{Eq Gram system}
\sum\limits_{\substack{j=0,1,...,M-1;\\i=0,1,...,N-1}}G^{ik}_{jl}\,c^i_j=4({\tilde\varkappa}^{T},g^k_l)_{\mathscr
F^{2T}_\sigma},\qquad j,l=1,\dots,M-1;\,\,i,k=1,\dots,N-1\quad
 \end{equation}
with the Gram matrix
\begin{align*}
&
G^{ik}_{jl}:=\left([M^{2T}+(M^{2T})^*]g^i_j\,,\,g^k_l\right)_{\mathscr
F^{2T}}\overset{(\ref{Eq Basic Eq})}=\\
&=\int \limits _{\Delta^k_l}dx\,dt\,\, g^k_l(x,t) \int \limits
_{\Delta^i_j} p(x,x'; |t - s|)\,\tilde g^i_j(x',s)\,ds\,dx'=\\
&=\int \limits _{\Delta^k_l}dx\,dt\,\,
g(x-l{\varepsilon},t-k\delta) \int \limits _{\Delta^i_j} p(x,x';
|t - s|)\,\tilde g(x'-{\varepsilon}
j,s-i\delta)\,ds\,dx'=\\
&=\int \limits _{\Delta^0_0}dx\,dt\,\, g(x,t) \int \limits
_{\Delta^0_0} p(x+l{\varepsilon},x'+{\varepsilon}
j; |t-k\delta - (s-i\delta)|)\,\tilde g(x',s)\,ds\,dx'=\\
&=\int \limits _{\Delta^0_0}dx\,dt\,\, g(x,t) \int \limits
_{\Delta^0_0} p(x+l{\varepsilon},x'+{\varepsilon} j; |t-s
+{(k-i)}\delta|)\,\tilde g(x',s)\,ds\,dx.
\end{align*}
Its entries depend on the difference $k-i$, so that we have a {\it
block-T$\ddot{\text{o}}$plitz matrix} with $N\times N$ blocks of
the size $M\times M$. Such a peculiarity of the Gram system is due
to the proper choice of the controls $g^i_j$. For the first time,
it was used in numerical testing by V.Yu.Gotlib in \cite{BGotIv
COCV,BGotlib JIIPP}.

The r.h.s. of the system (\ref{Eq Gram system}) is
 \begin{equation}\label{Eq beta}
4\,({\tilde\varkappa}^{T},g^k_l)_{\mathscr
F^{2T}_\sigma}=4\int\limits_{\sigma\times[0,2T]}(T-t)\,g^k_l(x,t)\,dx\,dt=4\int\limits
_{\Delta^k_l}(T-t)\,g^k_l(x,t)\,dx\,dt=:\beta^k_l.
 \end{equation}
Thus, the system is fully determined by the inverse data $R^{2T}$
and the choice of controls $g^i_j$.

\section{Inverting algorithm}

The algorithms provided in \cite{VT} are apropriate for a wide
class of nonsingular block-Toeplitz matrices. However, the Gram
matrix $G^{ik}_{jl}$, with which we deal, is not only
block-Toeplitz but also {\it symmetric and positive}. This is the
case we are considering in this section.
\smallskip

\noindent$\bullet$\,\, So, we deal with a nonsingular symmetric
positive block-Toeplitz matrix
 $${\bf G} = \{G^{ik}_{jl}\}_{\substack{j,l=0,1,...,M-1;\\i,k=0,1,...,N-1}} =
 \begin{pmatrix}
\gamma_0& \gamma_{1}&\gamma_{2}&\ldots&\gamma_{N-1}\\
\gamma_1& \gamma_0&  \gamma_{1}& \ldots& \gamma_{N-2}\\
\gamma_2 & \gamma_1 & \gamma_0&\ldots & \gamma_{N-3}\\
\vdots& \vdots &\vdots&\ddots&\vdots\\
\gamma_{N-1}&\gamma_{N-2}&\gamma_{N -3}&\ldots&\gamma_0
\end{pmatrix},$$
where the blocks $\gamma_m$ are the $M\! \times\! M$ -\,matrices
with the entries $G_{jl}^{ik},$\\ $|i - k| = m;$ $m = 0,
\ldots,N-1.$ The indices $j$ and $l$ take the values $0 \leqslant
j,\,l \leqslant M -1$.

Introduce the row of the unknowns
$$ {\mathscr C} := \left( c_0^0, \ldots, c_{M-1}^0;c_0^1, \ldots,
c_{M-1}^1; \ldots; c_0^{N-1}, \ldots, c_{M-1}^{N-1} \right)$$ and
the row of the r.h.s. $\beta^k_l$
$$ {\mathscr B} := \left ( \beta_0^0, \ldots, \beta_{M-1}^0;
\beta_0^1, \ldots, \beta_{M-1}^1;\ldots; \beta_0^{N-1}, \ldots,
\beta_{M-1}^{N-1} \right)$$ defined by (\ref{Eq beta}), the rows
being of the length $MN$. Then (\ref{Eq Gram system}) is
equivalent to the system
\begin{equation}\label{Eq AG=B}
{\mathscr C} {\bf G} = {\mathscr B}.
\end{equation}
To solve (\ref{Eq AG=B}) one needs to invert the matrix ${\bf G}$.
For this, we apply a Levinson type algorithm presented in \cite
{VT}. The construction of the inverse matrix relies on the Theorem
\ref{Th 1} taken from \cite{VT} and exposed below.

In what follows, the accent $'$ denotes the operation of the block
transposition. That is, the blocks are transposed but their inner
structure is not disturbed.

 Let
$Y=(Y_0,\ldots,Y_{N-1})'$ be the block column, which satisfies the
relation
 \begin{equation} \label{GX-GY}
{\bf G} Y = \mathscr I_N = \begin{pmatrix} O\\O\\
\vdots \\I
\end{pmatrix},
 \end{equation}
where {$I$ and $O$} are the unit and zero $M\!\times\! M$
-\,matrices. Such  $Y$ does exist and is unique just because $\bf
G$ is nonsingular. The block column  $\mathscr I_N$ is the matrix
consisting of $MN$ rows and $M$ columns;  $Y$ is also a matrix of
the same size. Moreover, as is shown in \cite{VT}, the
$M\!\times\!M$- matrix $Y_{N-1}$ is necessarily {\it nonsingular}.
 \begin{Theorem}\label{Th 1}
Let $Y$  be determined by (\ref{GX-GY}). Then the representation
 \begin{align}
\notag &{\bf G}^{-1} = \begin{pmatrix}
Y_{N-1}&O&\ldots&O\\
Y_{N-2}&Y_{N-1}&\ldots&O\\ \vdots& \vdots&\ddots& \vdots\\
Y_0&Y_1&\ldots&Y_{N-1} \end{pmatrix} Y_{N-1}^{-1} \begin{pmatrix}
Y_{N-1}&Y_{N-2}&\ldots&Y_0\\ O&Y_{N-1}&\ldots&Y_1\\
\vdots&\vdots&\ddots&\vdots\\ O&O&\ldots&Y_{N-1}
\end{pmatrix} -\\
\label{Eq G^{-1}}&- \begin{pmatrix} O&O&\ldots&O\\ Y_0&O&\ldots&O\\
\vdots&\vdots&\ddots&\vdots\\Y_{N-2}&Y_{N-3}&\ldots&O
\end{pmatrix} Y_{N-1}^{-1} \begin{pmatrix}
O&Y_0&\ldots&Y_{N-2}\\ O&O&\ldots&Y_{N-3}\\ \vdots& \vdots&
\ddots&\vdots\\ O&O &\ldots&O \end{pmatrix}
 \end{align}
is valid.
\end{Theorem}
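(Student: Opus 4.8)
The plan is to prove the Gohberg--Semencul type formula \eqref{Eq G^{-1}} by verifying directly that the right-hand side, call it $\mathbf{R}$, satisfies $\mathbf{G}\mathbf{R} = I_{MN}$; since $\mathbf{G}$ is nonsingular, this identifies $\mathbf{R}$ with $\mathbf{G}^{-1}$. First I would set up notation: write $\mathbf{R} = L_1 Y_{N-1}^{-1} U_1 - L_2 Y_{N-1}^{-1} U_2$, where $L_1, U_1$ are the lower/upper block-triangular matrices built from the blocks $Y_0,\dots,Y_{N-1}$ with $Y_{N-1}$ on the diagonal, and $L_2, U_2$ are the strictly lower/upper block-triangular matrices built from $Y_0,\dots,Y_{N-2}$ with zeros on the diagonal. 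The key structural observation is that $L_1, L_2$ (and similarly $U_1, U_2$) are related by a shift: if $Z$ denotes the block down-shift (the $N\times N$ block matrix with $I$ on the first block-subdiagonal and $O$ elsewhere), then $L_2 = Z L_1$ in the appropriate sense, and $L_1 - L_2$ (suitably interpreted with a $Y_{N-1}$ correction) telescopes. I would phrase everything in terms of $Z$ and its transpose $Z'$, so that the block-Toeplitz matrix $\mathbf{G}$ itself satisfies the displacement relation $\mathbf{G} - Z\mathbf{G}Z'$ has block rank essentially one (this is the Toeplitz fingerprint that makes such formulas work).

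Next I would carry out the computation of $\mathbf{G}\mathbf{R}$ column-block by column-block, or equivalently verify that $\mathbf{G}\mathbf{R}$ acts as the identity on each block column. The crucial input is the defining relation \eqref{GX-GY}: $\mathbf{G}Y = \mathscr{I}_N$, i.e.\ $\mathbf{G}$ times the block column $(Y_0,\dots,Y_{N-1})'$ equals the block column with $I$ in the last slot and $O$ elsewhere. Because $\mathbf{G}$ is block-Toeplitz, shifting $Y$ down (padding with zeros at the top and dropping the bottom block) and multiplying by $\mathbf{G}$ produces a controlled result: one gets $\mathscr{I}_N$ shifted, plus a correction term involving the ``top row'' blocks $\gamma_1,\dots,\gamma_{N-1}$ of $\mathbf{G}$ paired against the dropped block $Y_{N-1}$. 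The whole proof is an exercise in bookkeeping these shift-corrections so that, when one assembles $L_1 Y_{N-1}^{-1} U_1$ and subtracts $L_2 Y_{N-1}^{-1} U_2$, all the off-diagonal ``garbage'' cancels and only the identity survives. I would also need the dual relation obtained by transposition: since $\mathbf{G}' = \mathbf{G}$ only at the level of the full matrix but the blocks satisfy $\gamma_m' = \gamma_{-m}$ in the block-transpose sense used here, one gets that $U_1, U_2$ play the symmetric role on the left, and it suffices to check $\mathbf{G}\mathbf{R} = I$ (the identity $\mathbf{R}\mathbf{G}=I$ then follows by symmetry, or is automatic from nonsingularity).

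The main obstacle I anticipate is purely combinatorial: keeping straight the four triangular matrices, the two shift operators, and the roles of $Y_{N-1}$ versus $Y_{N-1}^{-1}$, while correctly tracking which block-subdiagonal each correction term lands on. In particular, proving that $Y_{N-1}$ is nonsingular (asserted in the excerpt as established in \cite{VT}) is a prerequisite one must either cite or re-derive; I would cite it. A second, milder subtlety is making sure the block-transposition convention $'$ is applied consistently, since $\mathbf{G}$ is symmetric as an $MN\times MN$ matrix but its blocks $\gamma_m$ need not be individually symmetric — here the symmetry and positivity of $\mathbf{G}$ guarantee that the relevant leading block minors are invertible, which is what legitimizes the Levinson-type recursion behind \eqref{GX-GY} in the first place. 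Given all of this, the cleanest write-up is: (i) record the displacement identity for $\mathbf{G}$; (ii) record the shifted versions of \eqref{GX-GY}; (iii) expand $\mathbf{G}\mathbf{R}$ and collect terms block-diagonal by block-diagonal, showing the diagonal gives $I$ and every off-diagonal cancels. I would present (i)--(ii) as short lemmas or inline observations and then do (iii) as the one honest calculation.
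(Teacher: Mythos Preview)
The paper does not prove Theorem~\ref{Th 1} at all: it is explicitly stated as ``taken from \cite{VT}'' and no argument is given beyond the citation. Your proposal therefore goes well beyond what the paper does. The route you outline --- verify $\mathbf{G}\mathbf{R}=I$ directly using the block shift $Z$, the displacement identity $\mathbf{G}-Z\mathbf{G}Z'$ of low block rank, and shifted versions of \eqref{GX-GY} --- is the standard Gohberg--Semencul verification and is a correct and complete strategy; citing the nonsingularity of $Y_{N-1}$ from \cite{VT}, as you plan, is exactly what the paper itself does.

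One small correction to your commentary: in the setting of this paper the block in position $(i,k)$ is $\gamma_{|i-k|}$, so the block-transposed matrix coincides with $\mathbf{G}$ block-wise; combined with the assumed symmetry of $\mathbf{G}$ as an $MN\times MN$ matrix, this forces each $\gamma_m$ to be individually symmetric ($\gamma_m^{\mathsf T}=\gamma_m$). Your parenthetical that ``its blocks $\gamma_m$ need not be individually symmetric'' is therefore inaccurate here, though it is true for general (non-symmetric) block-Toeplitz matrices. This does not affect the validity of your proof plan --- if anything, it simplifies the bookkeeping in step (iii), since $U_1=L_1'$ and $U_2=L_2'$ hold at the level of full matrix transpose, and the dual relation you mention is immediate.
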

\smallskip

\noindent$\bullet$\,\,\,By positivity of $\bf G$, all the matrices
 $$ {\bf G}_k = \begin{pmatrix} \gamma_0& \gamma_1&\ldots& \gamma_k\\
\gamma_1&\gamma_0&\ldots& \gamma_{k-1}\\
\vdots& \vdots& \ddots& \vdots\\
\gamma_k& \gamma_{k-1} & \ldots& \gamma_0
\end{pmatrix}$$
of the order $k+1\,$ $(k = 0, \ldots, N-1)$ are symmetric and
positive definite. In particular, $\gamma_0$ is positive definite
and, hence, invertible.

To find the block column  $Y$ one can use the following recurrent
Procedure. Let
$$ Y^{(k)} = \left ( Y_0^{(k)}, Y_1^{(k)}, \ldots, Y_k^{(k)} \right )'$$
be found at the previous steps. So, $Y^{(k)}$ consists of $k+1$
blocks of the size $M \times M$ and satisfies the system
(\ref{GX-GY}) of the size $k+1$ ($0\leqslant k\leqslant N-1$).

The Levinson algorithm \cite{VT}, which we apply, makes the use of
the {\it normalizing factors}  $Q_k$ ($M \times M$-matrix)
entering in the representation
 $$
Y_l^{(k)} = \tilde Y_l^{(k)} Q_k; \qquad l = 0, \dots,
k\,\,\,\,\,(k = 0,1, \dots, N-1)
 $$
and being also determined during the Procedure.
\smallskip

\noindent$\bullet$\,\,\,The Procedure is as follows.
\begin{enumerate}
\item At the first step we determine $\tilde Y^{(0)}_0 =
\gamma_{0}^{-1} Q_0^{-1}$, where $Q_0$ is arbitrary nonsingular
$M\! \times\! M$\,-\,matrix. In particular one can take $Q_0 = I$.

\item Let the block vector $\tilde Y^{(k-1)}$ and factor $Q_{k-1}$
be already found. Introduce the auxiliary matrices $ E_k$ and
$F_k$ by
 $$
E_k = \gamma_1 \tilde Y_0^{(k-1)} + \gamma_2 \tilde Y_1^{(k-1)} +
\ldots + \gamma_k \tilde Y_{k-1}^{(k-1)}, \qquad F _k = - Q_{k-1}
E_k;
 $$
whereas the matrix $I - F_k^2$ turns out to be nonsingular \cite
{VT}. Then the factor $Q_k$ is determined by
$$ Q_k = (I - F_k^2) ^{-1} Q_{k-1}.$$

\item At the next step one computes the block vector $\tilde
Y^{(k)}$ by
\begin{align*}
& \left( \tilde Y^{(k)}_0, \tilde Y_1^{(k)}, \ldots, \tilde Y_k
^{(k)} \right)' =  \left( \tilde Y_{k-1}^{(k-1)}, \tilde
Y_{k-2}^{(k-1)},
\ldots, \tilde Y_0^{(k-1)} ,0 \right)' F_k +\\
&+ \left( 0, \tilde Y_0^{(k-1)}, \tilde Y_1^{(k-1)}, \ldots,
\tilde Y_{k-1}^{(k-1)} \right)'
\end{align*}
and determines
$$ Y_l^{(k)} =  \tilde Y_l^{(k)} Q_k ; \quad l= 0, \ldots, k. $$

\item Proceeding up to $k=N-1$, we find the block column $
Y^{(N-1)} = Y$. Then, by the use of (\ref{Eq G^{-1}}), we get the
matrix ${\bf G^{-1}}$.
\end{enumerate}
\smallskip

The algorithm requires $\sim  M^3 N^2$ operations and, thus, may
be regarded as "fast" \cite{VT}.
\smallskip

Inversion of Toeplitz matrices is also treated in the paper
\cite{HR}. However, it does not deal with the {\it block}
matrices, as is necessary in multidimensional problems. In the
meantime, perhaps, the algorithms of \cite {HR} may be developed
for this case.

\end{document}